\title{On Maximum, Typical and Generic Ranks}
\date{\today}
\author{Grigoriy Blekherman}
\address{School of Mathematics \\
Georgia Tech \\
686 Cherry Street \\
Atlanta, GA 30332 \\
USA}
\email{greg@math.gatech.edu}
\author{Zach Teitler}
\address{Department of Mathematics \\
1910 University Drive \\
Boise State University \\
Boise, ID 83725-1555 \\
USA}
\email{zteitler@boisestate.edu}
\keywords{Waring rank, tensor rank, high rank tensors, maximum rank, upper bounds for rank}
\subjclass[2010]{15A21, 15A69, 14N15}
\newtheorem{theorem}{Theorem}
\newtheorem{corollary}[theorem]{Corollary}
\newtheorem{proposition}[theorem]{Proposition}
\newtheorem{lemma}[theorem]{Lemma}
\theoremstyle{definition}
\newtheorem{example}[theorem]{Example}
\newtheorem{definition}[theorem]{Definition}
\theoremstyle{remark}
\newtheorem{remark}[theorem]{Remark}
\newcommand{\PP}{\mathbb{P}}
\newcommand{\C}{\mathbb{C}}
\newcommand{\R}{\mathbb{R}}
\newcommand{\F}{\mathbb{F}}
\newcommand{\rgen}{r_{\mathrm{gen}}}
\newcommand{\rmax}{r_{\mathrm{max}}}
\DeclareMathOperator{\Sym}{Sym}
\newcommand{\defining}[1]{\textbf{#1}}
\begin{document}

\begin{abstract}
We show that for several notions of rank including tensor rank, Waring rank,
and generalized rank with respect to a projective variety,
the maximum value of rank is at most twice the generic rank.
We show that over the real numbers, the maximum value of the real rank is at most twice the smallest typical rank,
which is equal to the (complex) generic rank.
\end{abstract}

\maketitle

\section{Introduction}

In many areas of applied mathematics, machine learning, and engineering,
the notion of shortest decomposition of a vector into simple vectors is of prime importance.
See for example \cite{comonmour96,MR2736103,MR2895192,MR2865915} and \cite[Chapter 4]{MR2723140}.
The length of the shortest decomposition is usually called the \defining{rank} of the vector.

In this article we consider the rank of a vector with respect to a variety
over an arbitrary field $\F$, but we will highlight
the real and complex situations later on.
Let $X\subset \F\PP^n$ be a projective variety
and let $\hat{X}\subset \F^{n+1}$
be the affine cone over $X$.
The variety $X$ is called \defining{nondegenerate} if $X$ (or equivalently $\hat{X}$) is not contained in a hyperplane.
In this case, for any vector $v \in \F^{n+1}$, $v \neq 0$,
we can define the \defining{rank of $v$ with respect to $X$} (\defining{$X$-rank} of $v$ for short)
as follows:
\[
  \operatorname{rank}_X(v) = \min r
  \quad
  \text{such that}
  \quad
  v=\sum_{i=1}^r x_i \quad \text{where} \quad x_i \in \hat{X},
\]
i.e., the $X$-rank of $v$ is the length of the shortest decomposition of $v$ into elements of $\hat{X}$.
We will assume that the variety $X$ is irreducible, as is the case in the applications of interest.

For example, tensor rank (real or complex) is rank with respect to the Segre variety,
symmetric tensor rank (also called Waring rank) is rank with respect to the Veronese variety, and anti-symmetric tensor rank
is rank with respect to the Grassmannian variety.
See section~\ref{sec:tensors} for more discussion of examples.

A rank $r$ is called \defining{generic} if the vectors of $X$-rank $r$ contain
a Zariski open subset of $\F^{n+1}$.
For this we assume $\F$ is infinite, to avoid trivialities.
It is well-known that over any algebraically closed field there is a unique generic $X$-rank for a nondegenerate variety $X$.
Over $\C$,  we can equivalently define rank $r$ to be generic if the set of vectors of rank $r$ contains an open subset
of $\C^{n+1}$ with respect to the standard product topology.
A significant effort has gone into the calculation of the generic rank for various varieties $X$,
and the generic rank is fairly well understood for various tensor ranks over $\C$, see section~\ref{sec:tensors}.

Much less is known about the maximum $X$-rank.
One of our main results shows that the generic rank and the maximum rank cannot be too far apart:

\begin{theorem}\label{thm:mainC}
Let $X \subset \F\PP^n$ be an irreducible nondegenerate variety over an algebraically closed field $\F$.
Let $\rmax$ be the maximum value of rank with respect to $X$ and
let $\rgen$ be the generic value of rank with respect to $X$.
Then $$\rmax \leq 2 \rgen.$$
\end{theorem}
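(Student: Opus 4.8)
The plan is to show that an arbitrary nonzero vector $v \in \F^{n+1}$ can be written as $v = w + (v-w)$ with both $w$ and $v-w$ of $X$-rank at most $\rgen$; concatenating a $\rgen$-term $X$-decomposition of $w$ with one of $v-w$ then exhibits $v$ as a sum of at most $2\rgen$ points of $\hat X$, so $\operatorname{rank}_X(v) \le 2\rgen$. Thus the whole problem reduces to finding a single vector $w$ for which both $w$ and $v-w$ are ``generic.''

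To produce such a $w$, let $U \subseteq \F^{n+1}$ be a nonempty Zariski-open subset all of whose vectors have $X$-rank exactly $\rgen$; such a $U$ exists by the definition of the generic rank. Since $\F$ is algebraically closed, the affine space $\F^{n+1}$ is an irreducible topological space in the Zariski topology, so any two nonempty open subsets have nonempty (indeed dense) intersection. The map $u \mapsto v - u$ is an automorphism of $\F^{n+1}$ as a variety, hence a Zariski homeomorphism, so $v - U$ is again a nonempty Zariski-open subset. By irreducibility, $U \cap (v - U) \neq \emptyset$; choose any $w$ in it. Then $w \in U$ gives $\operatorname{rank}_X(w) \le \rgen$, while $v - w \in U$ gives $\operatorname{rank}_X(v-w) \le \rgen$, which is exactly what we wanted. (If a trivial summand $w = 0$ or $v - w = 0$ were to occur one would simply discard it, but in fact $U$ avoids $0$ and $v - U$ avoids $v$, so this does not happen.)

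I do not anticipate a serious obstacle: once the ``split $v$ into two generic halves'' viewpoint is adopted, the argument is essentially immediate. The two points that deserve a line of care are (i) that an affine automorphism, here the point reflection $u \mapsto v - u$, carries Zariski-open sets to Zariski-open sets, which is clear, and (ii) the appeal to irreducibility of $\F^{n+1}$ to guarantee that the two open sets meet --- this is precisely where algebraic closedness is used, and it is exactly the step that fails over $\R$, where one must instead work with the Euclidean topology and the smallest typical rank. One may also note that the argument never uses uniqueness of the generic rank: it bounds $\rmax$ by twice \emph{any} generic rank.
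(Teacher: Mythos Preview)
Your proof is correct and follows essentially the same idea as the paper's: write an arbitrary point as a sum (or, projectively, a linear combination) of two points lying in the dense open set of generic-rank points. The only difference is cosmetic---you work affinely with the translation $u \mapsto v-u$ and the intersection $U \cap (v-U)$, while the paper works projectively with the line through $q$ and a generic $p$; one small side remark: irreducibility of $\F^{n+1}$ holds over any field, so algebraic closedness is really used to guarantee the existence of the Zariski-open $U$ (via constructibility of the image of $\Sigma_{\rgen,X}$), not to ensure the two opens meet.
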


While the above bound is elementary, we will show in Section \ref{sec:tensors} that it strongly improves
many existing bounds on the maximum tensor rank.
Also, due to its generality it applies to any notion of tensor rank: regular, symmetric and antisymmetric.
We show in Theorem \ref{thm: refinement if secant hypersurface} that the above bound can be slightly improved
to $\rmax \leq 2 \rgen-1$ in the special case where the Zariski closure of the vectors of rank $\rgen-1$
forms a hypersurface in $\F^{n+1}$ and $\F$ has characteristic $0$.

\medskip

Over the real numbers a rank $r$ is called \defining{typical} if the set of vectors of rank $r$ contains an open subset
of $\R^{n+1}$ with respect to the Euclidean topology.
Unlike over the complex numbers, there may exist more than one typical rank.
However, the lowest typical rank is equal to the generic rank over the complexification of $X$.
While this is probably well-known to the experts, we did not find it explicitly stated
in the literature in this generality:

\begin{theorem}\label{thm:typ=gen}
Let $X \subset \R\PP^n$ be an irreducible nondegenerate real projective variety
whose real points are Zariski dense.
Let $r_0$ be the minimum typical rank with respect to $X$
and let $\rgen$ be the generic rank with respect to the complexification $X_\C = X \otimes \C$.
Then
\[
  r_0 = \rgen.
\]
\end{theorem}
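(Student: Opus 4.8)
The plan is to prove the two inequalities $r_0 \ge \rgen$ and $r_0 \le \rgen$ separately, in both cases working in the affine cone and exploiting the addition (``secant'') map together with the Zariski density of the real points. For convenience write $\sigma_r\colon \hat{X}^{r} \to \F^{n+1}$, $(x_1,\dots,x_r)\mapsto x_1+\cdots+x_r$, so that the vectors of $X$-rank $\le r$ are exactly the image of $\sigma_r$ on the appropriate points.

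For $r_0 \ge \rgen$ I would show that no rank $r < \rgen$ can be typical. For such $r$, by definition of the generic rank the image of the complex map $\sigma_r\colon \hat{X}_\C^{\,r}\to\C^{n+1}$ is not Zariski dense; let $Z\subsetneq\C^{n+1}$ be its Zariski closure, a proper subvariety defined over $\R$. Every real vector of $X$-rank $\le r$ lies in $Z(\R)$, the common real zero locus of the real defining polynomials of $Z$. If $Z(\R)$ contained a nonempty Euclidean-open subset of $\R^{n+1}$, then a polynomial vanishing on $Z$ would vanish on that open set, hence vanish identically, forcing $Z=\C^{n+1}$, a contradiction. So the real rank-$r$ vectors have empty Euclidean interior and $r$ is not typical. (The same argument shows that for any proper subvariety $Z\subsetneq\C^{n+1}$ defined over $\R$, the set $Z(\R)$ is Euclidean-closed and nowhere dense; I will use this again below.)

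For $r_0 \le \rgen$ I would show that $\rgen$ itself is typical by producing a nonempty Euclidean-open set on which the real $X$-rank equals $\rgen$. Since $\rgen$ is the generic rank, the complex map $\sigma_{\rgen}\colon \hat{X}_\C^{\rgen}\to\C^{n+1}$ is dominant; as we are in characteristic $0$, generic smoothness yields a dense Zariski-open $U\subseteq\hat{X}_\C^{\rgen}$, contained in the smooth locus, on which $\sigma_{\rgen}$ is a submersion. Because $X$ has Zariski-dense real points, so does $\hat{X}$, and hence (using irreducibility of $X$) so does the product $\hat{X}^{\rgen}$; therefore $U$ contains a real point $p=(x_1,\dots,x_{\rgen})$ with each $x_i$ a smooth real point of $\hat{X}_\C$, near which $\hat{X}(\R)$ is a real-analytic manifold of dimension $\dim_\C \hat{X}$. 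At $p$ the complex differential $d(\sigma_{\rgen})_p$ is a $\C$-linear surjection onto $\C^{n+1}$ commuting with the complex conjugations coming from the real structures, so its restriction to the real tangent space $T_p\big(\hat{X}(\R)^{\rgen}\big)$ surjects onto $\R^{n+1}$ (if a real subspace $W\subseteq\R^{n+1}$ satisfies $W+iW=\C^{n+1}$ then $W=\R^{n+1}$). By the implicit function theorem, the real addition map $\hat{X}(\R)^{\rgen}\to\R^{n+1}$ is open near $p$, so its image contains a Euclidean-open neighborhood $V$ of $v:=x_1+\cdots+x_{\rgen}$.

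It remains to cut down $V$ so that the rank is exactly $\rgen$. Let $Z'\subsetneq\C^{n+1}$ be the Zariski closure of the image of $\sigma_{\rgen-1}\colon\hat{X}_\C^{\rgen-1}\to\C^{n+1}$, a proper subvariety by minimality of $\rgen$. By the remark at the end of the first step, $Z'(\R)$ is Euclidean-closed and nowhere dense, so $V\setminus Z'(\R)$ is a nonempty Euclidean-open set; each of its points lies in the image of the real addition map, hence has real $X$-rank $\le\rgen$, and lies outside $Z'(\R)$, hence has real $X$-rank $\ge\rgen$. Thus the real rank equals $\rgen$ on $V\setminus Z'(\R)$, so $\rgen$ is typical and $r_0\le\rgen$; combined with the first inequality this gives $r_0=\rgen$. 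I expect the main obstacle to be the middle step — locating a \emph{real} point at which the addition map is a genuine submersion — which requires combining generic smoothness in characteristic $0$ with the Zariski density of the smooth real points and then carefully descending surjectivity of the complex differential to the real tangent space; the remaining ingredients are the implicit function theorem and the elementary fact that proper subvarieties have nowhere-dense real loci.
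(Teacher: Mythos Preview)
Your proof is correct, and the inequality $r_0 \ge \rgen$ is handled essentially as in the paper. For the harder direction $r_0 \le \rgen$, however, you take a genuinely different route. The paper argues via the Tarski--Seidenberg theorem: the image of the real addition map $\Sigma_{\rgen,X}$ is semialgebraic, hence a finite union of basic semialgebraic sets, and since this image is Zariski dense in $\R^{n+1}$ at least one basic piece must be defined purely by strict inequalities, giving the required Euclidean-open set. Your argument instead locates a \emph{specific} smooth real point at which the complex differential of $\Sigma_{\rgen,X}$ is surjective (using generic smoothness in characteristic~$0$ and Zariski density of real points), descends surjectivity to the real tangent space via the conjugation-equivariance of the differential, and invokes the submersion/implicit function theorem to get an open image. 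The paper's approach is shorter and avoids any smoothness or differential-geometric input, relying only on the structure theory of semialgebraic sets; your approach is more explicit about \emph{where} the open set lives, uses only standard algebraic/differential geometry rather than real-algebraic structure theorems, and would adapt more readily to analytic or Nash settings. Both hinge on the same two facts you isolate: that proper real subvarieties have nowhere-dense real loci, and that real points are Zariski dense in $\hat{X}^{\rgen}$.
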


We are able to show a very similar and elementary bound for maximum $X$-rank with respect to real varieties as well:

\begin{theorem}\label{thm:mainR}
Let $X \subset \R\PP^n$ be an irreducible nondegenerate real projective variety
whose real points are Zariski dense.
Let $r_0$ be the minimum typical rank with respect to $X$, and
let $\rmax$ be the maximum value of rank with respect to $X$.
Then $$\rmax \leq 2 r_0.$$
\end{theorem}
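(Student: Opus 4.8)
The plan is to mimic the proof of Theorem~\ref{thm:mainC}, but to replace the step ``two nonempty Zariski-open subsets of an irreducible variety meet'' — which is false for the Euclidean topology on $\R^{n+1}$ — by an argument that exploits the conical structure of $\hat X$. Since $\hat X$ is a cone, $\operatorname{rank}_X(\lambda v)=\operatorname{rank}_X(v)$ for every $\lambda\in\R\setminus\{0\}$ and every $v\neq 0$, so the level sets of $\operatorname{rank}_X$ are cones; the idea is to work not with an open set on which the rank is $r_0$ but with the open \emph{cone} it generates. (I would first note that because the real points $X(\R)$ are Zariski dense and $X$ is nondegenerate, $\hat X(\R)$ spans $\R^{n+1}$, so every nonzero real vector has finite $X$-rank and $\rmax$ is indeed well defined; and I will use freely that rank is subadditive, $\operatorname{rank}_X(a+b)\le\operatorname{rank}_X(a)+\operatorname{rank}_X(b)$.)

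So let $U\subseteq\R^{n+1}$ be a nonempty Euclidean-open set on which $\operatorname{rank}_X\equiv r_0$; such a $U$ exists precisely because $r_0$ is a typical rank. Put $C=\{\lambda u : \lambda\in\R\setminus\{0\},\ u\in U\}$, the open cone generated by $U$. Then $C$ is Euclidean-open (a union of dilates $\lambda U$), it is stable under multiplication by $\R\setminus\{0\}$ — in particular $C=-C$ — it avoids $0$, and by scale-invariance of rank we still have $\operatorname{rank}_X\equiv r_0$ on all of $C$.

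The crux is to show that every nonzero $v\in\R^{n+1}$ is a sum of two elements of $C$. Fix $u_0\in U$ and a nonzero $v$, and for $s>0$ write $v-su_0=-s\bigl(u_0-\tfrac{1}{s}v\bigr)$. As $s\to\infty$ the vector $u_0-\tfrac1s v$ tends to $u_0\in U$, hence lies in $U$ for all $s$ large enough; for such $s$ we get $v-su_0\in -sU\subseteq C$, while $su_0\in C$ trivially. Thus $v=(su_0)+(v-su_0)$ with both summands in $C$, and therefore
\[
  \operatorname{rank}_X(v)\ \le\ \operatorname{rank}_X(su_0)+\operatorname{rank}_X(v-su_0)\ =\ r_0+r_0\ =\ 2r_0 .
\]
Since $v$ was arbitrary, $\rmax\le 2r_0$. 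The same argument in fact gives $\rmax\le 2r$ for \emph{every} typical rank $r$, the minimal one giving the stated bound; and note that Theorem~\ref{thm:typ=gen} is not needed here, serving only to reinterpret $r_0$ as the complex generic rank. The one genuinely nontrivial point — the ``main obstacle'' relative to the complex proof — is the passage from the open set $U$ to the open cone $C$: without it the Euclidean analogue of the intersection step fails outright, and this is the single place where both the Euclidean topology and the conical nature of $\hat X$ are used essentially.
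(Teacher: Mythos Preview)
Your proof is correct and follows essentially the same approach as the paper. The paper phrases it more tersely: it takes an open ball $B$ of rank-$r_0$ points, observes that $B-B$ is a Euclidean neighborhood of the origin, and then uses scale invariance to place a scalar multiple of any $v$ inside $B-B$; unwinding this (choose $b_1\in B$ and $\lambda$ small so that $b_1-\lambda v\in B$) gives exactly your decomposition $v=su_0+(v-su_0)$ with $s=1/\lambda$.
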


Again, despite being elementary the above theorem provides the best known bounds on maximum real tensor rank in many cases.
In fact, much less is known about the maximum real rank, so the bound of Theorem \ref{thm:mainR} is even stronger.

\section{Proofs of Main Theorems}

We begin by working over an algebraically closed field, then we work over $\R$.

\subsection{Rank over algebraically closed fields.}

Let $X \subset \PP^n$ be an irreducible nondegenerate variety over an algebraically closed field $\F$.
Since $X$ spans $\PP^n$, choosing a basis from points of $X$ shows that every point has rank at most $n+1$.
The best general bound on maximum rank with respect to $X$ is to our knowledge, in characteristic $0$:
\begin{equation}\label{eq: general bound}
  \rmax(X) \leq n+1-\dim(X),
\end{equation}
see \cite{Geramita}, remarks following Theorem~7.6,
and \cite[Prop.~5.1]{Landsberg:2009yq}.
This fails in positive characteristic.
For example, if $X$ is a smooth plane conic in characteristic $2$ and $P$ is its strange point,
meaning that every line through $P$ is tangent to $X$,
then $r(P) = 3$.
However see \cite{MR2783180} for the result $\rmax(X) \leq n+2-\dim(X)$ in positive characteristic.

We now prove Theorem \ref{thm:mainC}, which usually provides a much better bound on the maximum rank:

\begin{proof}[Proof of Theorem \ref{thm:mainC}]
Every point is a sum of two general points, each having rank $\rgen$, and therefore the maximum rank is at most $2\rgen$.

Explicitly, let $U$ be a Zariski dense open subset of points of rank exactly $\rgen$.
Let $q \in \PP^n$ be any point and let $p$ be any point in $U$.
The line $L$ through $q$ and $p$ intersects $U$ at another point $p'$
(in fact, at infinitely many more points).
Since $p$ and $p'$ span $L$, $q$ is a linear combination of $p$ and $p'$.
Since $p$ and $p'$ each have rank $\rgen$, $q$ has rank at most $2\rgen$.
\end{proof}

We now describe a slight improvement to this theorem in certain cases.
First we recall the definition of the secant variety.

\begin{definition}
The $r$-th \defining{secant variety} $\sigma_r(X)$ is the Zariski closure of the set of points of rank at most $r$.
\end{definition}
Since points of $X$ span $\PP^n$ the $(n+1)$-st secant variety $\sigma_{n+1}(X)$ certainly fills $\PP^n$.
Note that the generic rank with respect to $X$ is the least $r$ such that $\sigma_r(X) = \PP^n$.

The following map will be useful.
Let $\hat{X}$ be the affine cone over $X$
and let $\Sigma_{r,X} : \hat{X}^r \to \mathbb{A}^{n+1}$ be the map
\[
  \Sigma_{r,X}(x_1,\dotsc,x_r) = x_1 + \dotsb + x_r .
\]
The image of $\Sigma_{r,X}$ is precisely the affine cone over the set of points of rank $r$ or less.
The secant variety $\sigma_r(X)$ is the Zariski closure of the projectivization of $\Sigma_{r,X}(\hat{X}^r)$.
(See for example \cite{MR2252121} where a slight variant of this map is used to describe the defining ideal of $\sigma_r(X)$.)
If $X$ is irreducible then so is $\sigma_r(X)$ and
if $r < \rgen(X)$ then $\dim \sigma_r(X) < \dim \sigma_{r+1}(X)$ \cite[1.2]{MR947474}.

The set of points of rank $\rgen$ contains a dense Zariski open subset of $\sigma_{\rgen}(X) = \PP^n$.
For $r < \rgen$ the set of points of rank $\leq r$ contains a dense subset of $\sigma_r(X)$;
this dense subset can be taken to be open, and to consist of points of rank equal to $r$:
\begin{lemma}
Let $X \subset \PP^n$ be an irreducible nondegenerate variety over an algebraically closed field.
Let $r \leq \rgen$.
The set of points of rank equal to $r$ contains a dense Zariski open subset of $\sigma_r(X)$.
\end{lemma}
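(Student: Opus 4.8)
The plan is to realize the points of $X$-rank exactly $r$ as the set-theoretic difference between the points of rank at most $r$ and the points of rank at most $r-1$, and to exploit that the former is a dense constructible subset of $\sigma_r(X)$ while the latter sits inside the proper closed subvariety $\sigma_{r-1}(X)$.

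First I would observe that the set $S_{\leq r} \subseteq \PP^n$ of points of $X$-rank at most $r$ is constructible. Indeed $\hat{X}$ is an irreducible affine variety, so $\hat{X}^{r}$ is a variety and $\Sigma_{r,X}\colon \hat{X}^{r} \to \mathbb{A}^{n+1}$ is a morphism; by Chevalley's theorem its image is constructible, and, as noted above, that image is the affine cone over $S_{\leq r}$, so $S_{\leq r}$ (the projectivization of the image with the origin removed) is constructible as well. By the definition of the secant variety, $S_{\leq r}$ is dense in $\sigma_r(X)$, which is irreducible because $X$ is. Since a dense constructible subset of an irreducible variety contains a nonempty, hence dense, Zariski open subset, there is a dense open $U \subseteq \sigma_r(X)$ with $U \subseteq S_{\leq r}$.

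Next I would cut $U$ down by the lower secant variety. Points of rank at most $r-1$ have rank at most $r$, so $S_{\leq r-1} \subseteq S_{\leq r} \subseteq \sigma_r(X)$ and hence $\sigma_{r-1}(X) \subseteq \sigma_r(X)$. Because $r \leq \rgen$ we have $r-1 < \rgen$, so by the strict increase of secant dimensions recalled above, $\dim \sigma_{r-1}(X) < \dim \sigma_r(X)$; thus $\sigma_{r-1}(X)$ is a proper closed subvariety of the irreducible variety $\sigma_r(X)$, and $U \setminus \sigma_{r-1}(X)$ is again a dense Zariski open subset of $\sigma_r(X)$. Every point of it has rank at most $r$, but, lying outside $\sigma_{r-1}(X) \supseteq S_{\leq r-1}$, it does not have rank at most $r-1$; hence it has rank exactly $r$, which is what the lemma asserts.

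The one genuine input is the passage from a dense constructible set to a dense open subset of it, i.e.\ Chevalley's theorem together with irreducibility of $\sigma_r(X)$; everything else is bookkeeping and the already-cited fact that secant dimensions strictly increase below the generic rank. I would also dispose of the small cases separately, where the notation $\sigma_{r-1}(X)$ requires interpretation: for $r=1$ one takes $\sigma_0(X) = \emptyset$ and observes that $S_{\leq 1}$ is dense in $\sigma_1(X) = X$, and for $r=0$ the statement is vacuous.
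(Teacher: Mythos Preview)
Your proof is correct and follows essentially the same approach as the paper: apply Chevalley's theorem to the summation map $\Sigma_{r,X}$ to see that the points of rank $\leq r$ form a dense constructible subset of $\sigma_r(X)$, extract a dense open subset, and then remove $\sigma_{r-1}(X)$, which has strictly smaller dimension since $r-1 < \rgen$. Your version is somewhat more explicit (e.g.\ in handling the cases $r=0,1$), but the argument is the same.
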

\begin{proof}
The projectivization of the image of $\Sigma_{r,X}$ is dense in $\sigma_r(X)$ and constructible by Chevalley's theorem.
Every dense constructible set contains a dense open subset.
The set of points of rank less than $r$ is contained in $\sigma_{r-1}(X)$,
which has strictly lower dimension than $\sigma_r(X)$.
Removing it leaves a nonempty dense open subset of the points of rank equal to $r$.
\end{proof}

Now we can give a slight improvement to Theorem \ref{thm:mainC}
if the secant variety $\sigma_{\rgen-1}(X)$ is a hypersurface,
in characteristic zero.

\begin{theorem}\label{thm: refinement if secant hypersurface}
Let $X \subset \PP^n$ be an irreducible nondegenerate variety over an algebraically closed field of characteristic zero.
Suppose $X$ is not a hypersurface,
but for some $r$, $\sigma_r(X)$ is a hypersurface;
necessarily $r = \rgen - 1$.
Then $\rmax \leq 2r+1 = 2\rgen - 1$.
If the hypersurface $\sigma_r(X)$ has no points of multiplicity equal to $\deg(\sigma_r(X))-1$
then $\rmax \leq 2r = 2\rgen - 2$.
\end{theorem}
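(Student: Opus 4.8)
The plan is to bound the rank of an arbitrary $q\in\PP^n$ by writing $q = v+w$ with $\operatorname{rank}_X(v)\le r$ and $\operatorname{rank}_X(w)\le\rgen = r+1$, giving $\operatorname{rank}_X(q)\le 2r+1$. Put $Y=\sigma_r(X)$, let $\hat Y\subset\mathbb{A}^{n+1}$ be its affine cone, and let $U\subset Y$ be the dense open set of points of rank exactly $r$ supplied by the Lemma, with $\hat U$ its punctured cone. Since $Y$ is a hypersurface, $\hat Y$ is a hypersurface in $\mathbb{A}^{n+1}$, so the general affine line through $q$ meets $\hat Y$; hence the rational map $\hat U\to\PP^n$ sending $a$ to $[q-a]$ is dominant, its general fibres being the finite sets $\hat U\cap\ell$ for lines $\ell$ through $q$. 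The points of rank at most $\rgen$ form a dense open subset of $\PP^n$, so some $a\in\hat U$ has $[q-a]$ in that subset; then $q = a+(q-a)$ witnesses $\operatorname{rank}_X(q)\le r+(r+1) = 2r+1$. This argument is characteristic-free and proves the first assertion.

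For the sharper bound under the extra hypothesis I would instead try to place $q$ on a line through two distinct points of $U$, neither equal to $q$: from $q = ap_1+bp_2$ with $p_1,p_2\in U$ distinct and $a,b\ne 0$ we get $\operatorname{rank}_X(q)\le 2r$. If $q\notin Y$, project from $q$: the induced finite surjective map $Y\to\PP^{n-1}$ has degree $d:=\deg Y\ge 2$, the inequality because $Y\supseteq X$ is nondegenerate and so is not a hyperplane. In characteristic zero, generic smoothness forces the general fibre to consist of $d$ distinct points; as these fibres sweep out $Y$, a general one avoids $Y\setminus U$, so any two of its points, together with $q$, give a decomposition $q = ap_1+bp_2$ with $a,b\ne 0$ (as $q\notin Y$). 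If instead $q\in Y$, I would split into cases according to whether $Y$ is a cone with vertex $q$.

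If $Y$ is a cone with vertex $q$, pick a general $p\in U$; the line $\overline{qp}$ lies in $Y$ and contains the point $p\in U$, so a further general point $p'$ of $\overline{qp}$ also lies in $U$, and $q = ap+bp'$ with $a,b\ne 0$. If $Y$ is not a cone with vertex $q$, projection from $q$ restricts to a dominant generically finite map $\pi\colon Y\to\PP^{n-1}$ of some degree $e\ge 1$; by the intersection-multiplicity description of $\operatorname{mult}_q(Y)$, the general line through $q$ meets $Y$ at $q$ with multiplicity exactly $m:=\operatorname{mult}_q(Y)$ and in $d-m$ further points, so $e = d-m$. In characteristic zero generic smoothness again makes these $d-m$ points distinct and, by the sweeping argument, contained in $U$; if $e\ge 2$ we conclude as before, whereas $e=1$ would force $\operatorname{mult}_q(Y) = d-1$, against the hypothesis. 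Hence $\operatorname{rank}_X(q)\le 2r$ for every $q$.

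The steps I expect to demand care, rather than the one-line pigeonhole arguments, are (i) the invocations of generic smoothness guaranteeing that the relevant intersection points are \emph{reduced} and lie in the dense open rank-$r$ locus $U$ --- this is exactly where characteristic zero enters, and it is genuinely needed, as the strange-point phenomenon in characteristic $2$ shows; and (ii) the degenerate possibility that $Y$ is a cone with vertex $q$, where projection from $q$ is no longer generically finite and the degree bookkeeping collapses, which is handled separately above.
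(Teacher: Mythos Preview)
Your argument is correct in substance and follows essentially the same route as the paper's: both bound $r(q)$ by putting $q$ on a line meeting $\sigma_r(X)$ in enough general (hence rank-$r$) points, splitting off the cone case and using generic smoothness (Bertini) for reducedness in characteristic zero. The paper organizes the proof by the trichotomy $\operatorname{mult}_q(Y)<d-1$, $=d-1$, $=d$, obtaining $2r$, $2r+1$, $2r$ respectively; you instead give a uniform $2r+1$ argument first and then refine, but the geometric content is the same.

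One genuine oversight in your first paragraph: the implication ``the general affine line through $q$ meets $\hat Y$, hence $a\mapsto[q-a]$ is dominant on $\hat U$'' is not valid when $\operatorname{mult}_{[q]}(Y)=d$, i.e.\ when $Y$ is a cone with vertex $[q]$. In that case the general affine line through $q$ (direction $[w]\notin Y$) meets $\hat Y$ \emph{only at $q$}, so the fibre is empty and the map has image contained in $Y$, not all of $\PP^n$. You handle exactly this cone case in your third paragraph and obtain $r(q)\le 2r$ there, so your overall proof is fine once you invoke that case up front rather than claiming the first paragraph is self-contained. (With that fix, your remark that the $2r+1$ bound is characteristic-free is correct and is a small sharpening of the paper's statement: the paper's treatment of the $m=d-1$ and $m=d$ cases does not actually use Bertini, and for $m<d-1$ one only needs a single further intersection point, not reducedness, to get $2r+1$.)
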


\begin{proof}
Let $d = \deg(\sigma_r(X))$.
First suppose $q$ is any point at which $\sigma_r(X)$ has multiplicity strictly less than $d-1$,
including multiplicity $0$ if $q$ lies off of $\sigma_r(X)$.
Let $p \in \sigma_r(X)$ be a general point and let $L$ be the line through $p$ and $q$.
By Bertini's theorem \cite[Thm.~6.3]{MR725671} $L$ intersects $\sigma_r(X)$ with multiplicity $1$ at $p$,
and with multiplicity strictly less than $d-1$ at $q$ (if $q$ lies on $\sigma_r(X)$).
So $L$ intersects $\sigma_r(X)$ in at least one more point $p'$ distinct from $p$ and $q$.
Since $p$ is general, so is $p'$.
Then $q$ is a linear combination of $p$ and $p'$, so $r(q) \leq r(p) + r(p') = 2r$.

Next suppose $q$ is a point at which $\sigma_r(X)$ has multiplicity $d$.
Then $\sigma_r(X)$ is a cone with vertex $q$.
Let $p \in \sigma_r(X)$ be a general point and $L$ the line through $p$ and $q$;
then $L \subset \sigma_r(X)$ and $L$ intersects the open subset of points of rank $r$,
so in fact $L$ has infinitely many points of rank $r$.
Choosing $p'$ as before, once again $r(q) \leq 2r$.

Finally suppose $q$ is a point at which $\sigma_r(X)$ has multiplicity equal to $d-1$.
Let $p \in \PP^n$ be a general point and let $L$ be the line through $p$ and $q$.
Since $p$ is general, $L$ intersects $\sigma_r(X)$ with multiplicity $d-1$ at $q$,
so $L$ intersects $\sigma_r(X)$ at another point $p'$.
Since $p$ is general, $p' \in \sigma_r(X)$ is general.
Then $q$ is a linear combination of $p$ and $p'$, so $r(q) \leq r(p) + r(p') = 2r+1$.
\end{proof}

We mention a simple generalization of Theorem~\ref{thm: refinement if secant hypersurface}
and \eqref{eq: general bound}:
\begin{proposition}
Suppose $\F$ has characteristic $0$ and $\sigma_k(X)$ has codimension $c$.
Let $s$ be the maximum rank of points on $\sigma_k(X)$.
Then $\rmax \leq \max\{s,(c+1)k\}$.
\end{proposition}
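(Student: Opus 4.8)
The plan is to run the argument of Theorem~\ref{thm: refinement if secant hypersurface}, but to cut the relevant secant variety down not by a line but by a linear space of dimension $c$. Write $Y = \sigma_k(X)$. If $k \ge \rgen$ then $Y = \PP^n$, $c=0$, and the asserted bound $\rmax \le \max\{\rmax,k\}$ is vacuous; so assume $k < \rgen$, in which case $Y$ is a proper, irreducible, nondegenerate subvariety of $\PP^n$ (it contains the nondegenerate variety $X$) of codimension $c \ge 1$. Two standard inputs will be used: by the Lemma above, the points of $X$-rank exactly $k$ form a dense open subset of $Y$, so a general point of $Y$ has $X$-rank $k$; and a nondegenerate irreducible variety satisfies $\deg Y \ge \operatorname{codim} Y + 1 = c+1$. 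If $q \in Y$ then $\operatorname{rank}_X(q) \le s$ by definition of $s$, so from now on fix $q \in \PP^n \setminus Y$.

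The geometric core is to find a linear space $\Lambda$ of dimension $c$ with $q \in \Lambda$ whose intersection with $Y$ contains $c+1$ general points of $Y$ that span $\Lambda$. I would do this in two cuts. First choose a general $(c+1)$-plane $\Pi$ through $q$; since $Y$ has codimension $c$ in $\PP^n$, Bertini's theorem (in characteristic zero) shows that $C := \Pi \cap Y$ is an irreducible nondegenerate curve in $\Pi \cong \PP^{c+1}$ of degree $\deg Y \ge c+1$, and that a general point of $C$ is a general point of $Y$. Then choose a general hyperplane $\Lambda \subset \Pi$ through $q$; it meets $C$ in $\deg Y \ge c+1$ reduced points, which by the classical general position theorem for projective curves (again using characteristic zero) lie in linearly general position in $\Lambda \cong \PP^{c}$. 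Choosing $c+1$ of them, $p_0,\dots,p_c$, they span $\Lambda$, they are general points of $Y = \sigma_k(X)$ and hence each has $X$-rank exactly $k$, and $q \in \Lambda = \linspan(p_0,\dots,p_c)$; therefore $\operatorname{rank}_X(q) \le \operatorname{rank}_X(p_0) + \dots + \operatorname{rank}_X(p_c) = (c+1)k$. Together with the case $q \in Y$ this gives $\rmax \le \max\{s,(c+1)k\}$.

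As a check that this is the right bound: taking $k=1$ gives $Y = X$, $s=1$, $c = n-\dim X$, and the bound reads $\rmax \le n+1-\dim X$, recovering \eqref{eq: general bound}; taking $c=1$ gives $\rmax \le \max\{s,2k\}$, which is the uniform part of Theorem~\ref{thm: refinement if secant hypersurface}.

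I expect the delicate point to be that constraining the cutting linear spaces to pass through the fixed point $q$ does not destroy the genericity properties invoked above --- irreducibility and nondegeneracy of $C$, reducedness and linear general position of $\Lambda \cap C$, and the fact that a general point of $C$ is general in $Y$. Since $q \notin Y$, the linear system of $(c+1)$-planes through $q$ (respectively, of hyperplanes of $\Pi$ through $q$) is still base-point free away from $q$ and dominates $Y$ through the incidence correspondence, so the required Bertini- and general-position-type statements remain valid in characteristic zero; this is exactly where the characteristic hypothesis enters, and the strange-conic example recalled in the text shows that general position of linear sections genuinely fails in positive characteristic.
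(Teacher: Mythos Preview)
Your argument is correct and follows the same strategy as the paper: for $q \notin \sigma_k(X)$, cut $\sigma_k(X)$ by a general $c$-dimensional linear space through $q$ and express $q$ as a combination of $c+1$ general points of $\sigma_k(X)$, each of rank $k$. The paper's proof does this in a single step, asserting directly (via Bertini) that a general $c$-plane through $q$ meets $\sigma_k(X)$ in a reduced set of general points spanning the plane; your intermediate passage through a curve section in a $(c+1)$-plane and the general position theorem is simply a more detailed justification of that spanning claim, and your final paragraph correctly isolates the one genuinely delicate point (that the base point $q$ does not spoil the required genericity), which the paper leaves implicit.
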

\begin{proof}
Let $q \in \PP^N$.
If $q \in \sigma_k(X)$ then $r_X(q) \leq s$.
Otherwise, a general $c$-plane through $q$ is spanned by its intersection with $\sigma_k(X)$,
which is reduced by Bertini's theorem \cite[Thm.~6.3]{MR725671},
giving $q$ as a linear combination of $c+1$ general points on $\sigma_k(X)$ which each have rank $k$.
\end{proof}

The upper bound \eqref{eq: general bound} is given by $k=s=1$.
Theorem~\ref{thm: refinement if secant hypersurface} is the case $c=1$. 
We believe that the best bounds resulting from this proposition are just these previously observed extreme cases,
and intermediate values of $k$ and $c$ probably do not give interesting new bounds.
Perhaps if some secant variety of $X$ is highly degenerate,
this bound might be interesting.

\begin{remark}
The $X$-rank of $q$ is the least length of a reduced zero-dimensional subscheme of $X$
whose span includes $q$.
A zero-dimensional subscheme of $X$ whose span includes $q$ is called an apolar scheme to $q$.
Related quantities include the cactus rank (or scheme length), the least length of any apolar scheme;
the smoothable rank, the least length of any smoothable apolar scheme;
the curvilinear rank, similarly; and so on.
See \cite{Bernardi:2012fk} for a thorough treatment and comparison of these and several other notions.
In \cite{Ballico:2013kx} it is shown that the cactus rank, smoothable rank, and curvilinear rank
with respect to a Veronese variety are bounded by twice the generic rank.
As all these quantities are less than or equal to the $X$-rank, we recover these results.
\end{remark}

\subsection{Real varieties}

Now we consider rank with respect to a real variety.
As before, let $X \subset \R\PP^n$ be an irreducible nondegenerate variety.
We assume the real points of $X$ are Zariski dense in $X$,
that is, $X$ is the Zariski closure of its set of real points;
equivalently, $X$ has a smooth real point.

Let $X_\C = X \otimes \C$ be the complexification of $X$, i.e.\ the variety in $\C\PP^n$
defined by the same equations that define $X \subset \R\PP^n$.
Since $X_\C$ is still the Zariski closure of the real points of $X$, $X_\C$ is irreducible.
(Otherwise, if $f, g$ are complex polynomials such that $fg$ vanishes on $X_\C$ but neither $f$ nor $g$ does,
then $|f|^2$ and $|g|^2$ are real polynomials inducing a decomposition of the real points of $X$.)
A priori, the real rank of $v \in \R^{n+1}$ with respect to $X$ may be strictly greater than
the (complex) rank of the same point $v \in\C^{n+1}$  with respect to the complexification $X_\C$.
See for instance Example 1.2 of \cite{Reznick:2013uq}.
Moreover, the maximum real rank with respect to $X$ may a priori be different than the maximum (complex)
rank with respect to $X_\C$ (but we are not aware of an example in which this happens).

An integer $r$ is called a \defining{typical rank} if it is the rank of every point in some nonempty open subset of $\R^{n+1}$
in the Euclidean topology.
In contrast to the closed field case, there may be more than one typical rank.
See for example \cite{Blekherman:2012dq}.

We now show Theorem \ref{thm:typ=gen}.
It is proved for triple tensor products $\C^\ell \otimes \C^m \otimes \C^n$
in \cite[Thm.~7.1]{MR2854886}.
(See also results and references in \cite[\textsection7]{MR2854886}
regarding the maximum typical rank.)

\begin{proof}[Proof of Theorem \ref{thm:typ=gen}]
Certainly $r_0 \geq \rgen$: $\sigma_{\rgen-1}(X_\C)$
is contained in a hypersurface
and it is defined over $\R$.
Therefore $\sigma_{\rgen-1}(X_\C)$ is contained in a hypersurface defined over $\R$,
and hence so is $\sigma_{\rgen-1}(X)$.
Thus $\rgen-1$ is not a typical rank, nor is any rank less than $\rgen-1$.

On the other hand, $r = \rgen$ is a typical rank.
Since $\hat{X}$ and $\hat{X}^r$ are semialgebraic sets
and the map $\Sigma_{r,X}$ is a linear projection,
the image $\mathcal{S} = \Sigma_{r,X}(\hat{X}^r)$ is a semialgebraic set by the Tarski-Seidenberg theorem \cite[Thm.~2.2.1]{MR1659509}.
We can write $\mathcal{S}$ as a finite union $B_1 \cup \dotsb \cup B_t$ of basic semialgebraic sets,
where each $B_i$ is nonempty and defined by a finite set of real polynomial equations $f(x) = 0$
and inequalities $f(x) > 0$ \cite[\textsection2.1]{MR1659509}.
If the definition of $B_i$ includes an equation then $B_i$ is contained in a hypersurface.
Since the Zariski closure of $\mathcal{S}$ is $\sigma_{\rgen}(X) = \R\PP^n$,
$\mathcal{S}$ is Zariski dense,
so there must be at least one $B_i$ whose definition
consists solely of inequalities.
Removing the closure of points of rank less than $\rgen$ if necessary, this $B_i$ is an open set in the Euclidean topology
consisting of points of rank $\rgen$.
This shows $\rgen$ is a typical rank.
\end{proof}

Combining the above result with Theorem~\ref{thm:mainC}
shows that the maximum complex rank with respect to $X$ is at most twice the lowest typical rank.
But as we remarked above, the maximum real rank may be different than the maximum complex rank.
So in Theorem~\ref{thm:mainR} we show, analogously to Theorem~\ref{thm:mainC},
that the maximum real rank with respect to $X$ is also bounded by twice the least typical rank.
\begin{proof}[Proof of Theorem \ref{thm:mainR}]
Let $B \subset \R^{n+1}$ be a small open ball in which every point has rank $r_0$.
Then $B-B$ is an open neighborhood of the origin in which every point $p$ is a sum (difference)
of two points of rank $r_0$, so $r(p) \leq 2r_0$.
But every nonzero point has a scalar multiple in $B-B$ and rank is invariant under scalar multiplication.
\end{proof}

\section{Applications to Tensor Rank}\label{sec:tensors}
We now apply our bound on the maximum rank with respect to $X$ to various tensor ranks over $\C$ and $\R$.
We also discuss the relation between our bound and previously known bounds on the maximum tensor rank.
Note that there seems to be relatively little known about upper bounds for real tensor rank.
With rare exceptions previously known upper bounds on maximum rank are over $\C$,
while our Theorems also give the same upper bounds over $\R$.

\subsection{Symmetric Tensor Rank}

Symmetric tensors correspond to homogeneous polynomials (forms). The symmetric tensor rank of a homogeneous form $F$ of degree $d$ in $n$ variables (equivalently $n$-variate symmetric tensor of order $d$) 
is the least number $r$ of terms needed to write $F$ as a linear combination of $d$th powers of linear forms,
$$F = c_1 \ell_1^d + \dotsb + c_r \ell_r^d.$$
This corresponds precisely to the rank of a form $F$ with respect to the $d$-th Veronese variety $\nu_{d}(\PP^{n-1})$.
This is also known as the \defining{Waring rank} of $F$.
For example, since $xy = \frac{1}{4}(x+y)^2 - \frac{1}{4}(x-y)^2$,
the Waring rank of $xy$ is $2$
(as long as the characteristic of the field is not $2$).
See \cite{comonmour96,MR1735271,MR2865915} for introductions to Waring rank.
We limit our discussion to the fields $\R$ and $\C$.

We denote the maximum Waring rank $\rmax(n,d)$.
Classically, $\rmax(n,2) = n$ and $\rmax(2,d) = d$ are well-known,
but to our knowledge, only three other values of maximum rank over $\C$ are known:
the maximum rank of plane cubics is $5$, $\rmax(3,3) = 5$ \cite[\textsection96]{MR0008171}, \cite{comonmour96}, \cite[\textsection8]{Landsberg:2009yq};
the maximum rank of plane quartics is $7$, $\rmax(3,4) = 7$ \cite{Kleppe:1999fk,Paris:2013fk};
and the maximum rank of cubic surfaces is $7$, $\rmax(4,3) = 7$ \cite[\textsection97]{MR0008171}.

The vector space of forms of degree $d$ in $n$ variables has dimension $\binom{n+d-1}{n-1}$,
so trivially $\rmax(n,d) \leq \binom{n+d-1}{n-1}$ (by taking a basis consisting of powers of linear forms).
Several improvements are known: $\rmax(n,d) \leq \binom{n+d-1}{n-1} - n + 1$ \cite{Geramita,Landsberg:2009yq};
better, $\rmax(n,d) \leq \binom{n+d-2}{n-1}$ \cite{MR2383331}.
This was improved by Jelisiejew \cite{Jelisiejew:2013fk}:
\begin{equation}\label{eq: Jelisiejew bound}
  \rmax(n,d) \leq \binom{n+d-2}{n-1} - \binom{n+d-6}{n-3}
\end{equation}
and subsequently improved further by Ballico and De Paris \cite{Ballico:2013sf}:
\begin{equation}\label{eq: Ballico-DeParis bound}
  \rmax(n,d) \leq \binom{n+d-2}{n-1} - \binom{n+d-6}{n-3} - \binom{n+d-7}{n-3} .
\end{equation}

We denote the generic complex Waring rank---the Waring rank of a general form in $n$ variables of degree $d$, that is, one with general coefficients---by $\rgen(n,d)$.
Its value is given by the Alexander--Hirschowitz theorem
\cite{MR1311347}:
$\rgen(n,d) = \lceil \frac{1}{n} \binom{n+d-1}{n-1} \rceil$,
except if $(n,d) = (n,2)$, $(3,4)$, $(4,4)$, $(5,4)$, $(5,3)$.
In the first exceptional case, $\rgen(n,2) = n$.
For the rest, $\rgen(n,d) = \lceil \frac{1}{n} \binom{n+d-1}{n-1} \rceil + 1$,
and the secant variety $\sigma_{\rgen-1}(\nu_d(\PP^{n-1}))$ is a hypersurface.

We immediately obtain the following Corollary:

\begin{corollary}
The maximum real Waring rank of a real form of degree $d \geq 3$ in $n$ variables is at most:
\[
  \rmax(n,d) \leq 2\left\lceil \frac{1}{n} \binom{n+d-1}{n-1} \right\rceil,
\]
except $\rmax(3,4) \leq 11$, $\rmax(4,4) \leq 19$, $\rmax(5,4) \leq 29$, $\rmax(5,3) \leq 15$.
The same upper bound holds for the complex Waring rank.
\end{corollary}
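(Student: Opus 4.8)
The plan is to apply Theorems~\ref{thm:mainC}, \ref{thm:mainR} and~\ref{thm:typ=gen} to the $d$-th Veronese variety $X = \nu_d(\PP^{n-1}) \subset \PP^N$, where $N = \binom{n+d-1}{n-1} - 1$, whose affine cone is the set of $d$-th powers of linear forms in $n$ variables. This $X$ is irreducible and nondegenerate, and since every real form of degree $d$ is a real linear combination of real $d$-th powers of linear forms, the real points of $X$ are Zariski dense in $X$; thus all three theorems apply. Because rank with respect to $\nu_d(\PP^{n-1})$ is exactly Waring rank, Theorem~\ref{thm:mainC} gives $\rmax(n,d) \leq 2\rgen(n,d)$ over $\C$, and Theorem~\ref{thm:mainR} together with Theorem~\ref{thm:typ=gen} (which identifies the minimum typical rank $r_0$ with the complex generic rank $\rgen(n,d)$) gives the same inequality over $\R$.

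Next I would substitute the value of $\rgen(n,d)$ furnished by the Alexander--Hirschowitz theorem. For $d \geq 3$ the only exceptional pairs are $(3,4)$, $(4,4)$, $(5,4)$ and $(5,3)$, the case $(n,2)$ being excluded since $d \geq 3$; for every other pair one has $\rgen(n,d) = \lceil \frac{1}{n}\binom{n+d-1}{n-1}\rceil$, and substituting this into the bounds above yields
\[
  \rmax(n,d) \leq 2\left\lceil \frac{1}{n}\binom{n+d-1}{n-1}\right\rceil
\]
over both $\R$ and $\C$, which is the asserted inequality outside the exceptional range.

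It remains to treat the four exceptional pairs, where $\rgen(n,d) = \lceil \frac{1}{n}\binom{n+d-1}{n-1}\rceil + 1$, so that $2\rgen$ exceeds the claimed value by exactly one. The improvement comes from the further fact, also part of the Alexander--Hirschowitz description, that for these pairs $\sigma_{\rgen-1}(\nu_d(\PP^{n-1}))$ is a hypersurface while $\nu_d(\PP^{n-1})$ itself is not (the latter holds whenever $d \geq 3$). Over $\C$, Theorem~\ref{thm: refinement if secant hypersurface} then gives $\rmax \leq 2\rgen - 1$; evaluating $\rgen$ at $(3,4)$, $(4,4)$, $(5,4)$, $(5,3)$ as $6$, $10$, $15$, $8$ produces the bounds $11$, $19$, $29$, $15$. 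Over $\R$ I would reproduce the line-through-a-point argument in the proof of Theorem~\ref{thm: refinement if secant hypersurface} with real lines, using three auxiliary facts: $\rgen = r_0$ is a typical rank (Theorem~\ref{thm:typ=gen}), so general real points have real rank $\rgen$; the real points of rank $\rgen - 1$ contain a Euclidean-open subset of the smooth real locus of the real hypersurface $\sigma_{\rgen-1}$, which follows from generic smoothness of $\Sigma_{\rgen-1,X}$ in characteristic $0$ together with the real implicit function theorem; and a single residual intersection point of a real line with a real hypersurface, being conjugation-invariant, is automatically real. Combining these as in the three multiplicity cases of Theorem~\ref{thm: refinement if secant hypersurface} should give $\rmax \leq 2\rgen - 1$ over $\R$ as well.

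The step I expect to be the main obstacle is this last one: making the refinement argument work over $\R$. Over an algebraically closed field a general line meets the secant hypersurface in the full expected number of points, but over $\R$ one must verify that the residual intersection --- after subtracting the multiplicity prescribed at the target point --- is not entirely composed of complex-conjugate pairs, and that the relevant Euclidean-open sets of rank-$(\rgen-1)$ and rank-$\rgen$ real points are positioned so that the line can be chosen to meet them. This is where the concrete degrees and singularity structure of the four exceptional secant hypersurfaces enter, in contrast to the complex half of the corollary, which is an immediate combination of Theorem~\ref{thm:mainC}, Theorem~\ref{thm: refinement if secant hypersurface} and the Alexander--Hirschowitz theorem.
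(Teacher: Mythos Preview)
Your proposal is correct and matches the paper's (implicit) argument for the complex case and for the non-exceptional real case: the paper simply says ``We immediately obtain the following Corollary'', meaning Theorem~\ref{thm:mainC} (respectively Theorems~\ref{thm:typ=gen} and~\ref{thm:mainR}) together with Alexander--Hirschowitz give $\rmax(n,d)\leq 2\rgen(n,d)$, and substituting the AH value of $\rgen(n,d)$ yields the stated formula. For the four exceptional pairs over $\C$, your use of Theorem~\ref{thm: refinement if secant hypersurface} (the secant hypersurface refinement) is exactly what produces the listed values $11,19,29,15=2\rgen-1$, and this is clearly the paper's intent as well, since the table following the corollary records these same numbers under the heading ``our upper bound''.

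Where you go further than the paper is the exceptional \emph{real} case. You correctly observe that Theorem~\ref{thm: refinement if secant hypersurface} is stated only over algebraically closed fields, so the bounds $11,19,29,15$ for real Waring rank are not literally covered by the paper's stated theorems; the paper does not supply (or even mention) a real analogue of that refinement. Your outline of how to adapt the line-through-a-point argument over $\R$ is reasonable, and your honesty about the obstacle---that in the multiplicity $<d-1$ case the residual intersection could a priori consist of conjugate pairs---is well placed. The paper simply does not address this; it treats the whole corollary as immediate. So your analysis is at least as careful as the paper's own treatment, and the issue you flag is a genuine subtlety in the real exceptional case rather than a defect in your argument. (If one is content with the weaker bounds $12,20,30,16=2\rgen$ over $\R$ in those four cases, of course Theorems~\ref{thm:typ=gen} and~\ref{thm:mainR} suffice without any further work.)
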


Asymptotically, the upper bound \eqref{eq: Ballico-DeParis bound} is $nd/(n+d-1)$ times the generic rank.
Our bound is asymptotically better than \eqref{eq: Ballico-DeParis bound},
but worse for some small cases.
In the following table,
$\rmax^{J}$ denotes the upper bound \eqref{eq: Jelisiejew bound},
$\rmax^{BDP}$ denotes the upper bound \eqref{eq: Ballico-DeParis bound}
and $\rmax^*$ denotes our upper bound, $\rmax^* = 2 \rgen$.
The exact maximum is listed in the three cases where it is known.
\[
\begin{array}{ll r r r r r}
  \toprule
  n & d & \rgen & \rmax^J & \rmax^{BDP} & \rmax^* & \rmax \\
  \midrule
  3 & 3 &  4 &  5 &  5 &  8 & 5 \\
  3 & 4 &  6 &  9 &  8 & 11 & 7 \\
  3 & 5 &  7 & 14 & 13 & 14 & \\
  3 & 6 & 10 & 20 & 19 & 20 & \\
  3 & 7 & 12 & 27 & 26 & 24 & \\
  3 & 8 & 15 & 35 & 34 & 30 & \\
  \bottomrule
\end{array}
\qquad
\begin{array}{ll r r r r r}
  \toprule
  n & d & \rgen & \rmax^J & \rmax^{BDP} & \rmax^* & \rmax \\
  \midrule
  4 & 3 &  5 &   9 &   9 & 10 & 7 \\
  4 & 4 & 10 &  18 &  17 & 19 & \\
  4 & 5 & 14 &  32 &  30 & 28 & \\
  4 & 6 & 21 &  52 &  49 & 42 & \\
  4 & 7 & 30 &  79 &  75 & 60 & \\
  4 & 8 & 42 & 114 & 109 & 84 & \\
  \bottomrule
\end{array}
\]

\bigskip

\bigskip

Other than the exceptional cases of the Alexander--Hirschowitz theorem,
the hypersurface condition of Theorem~\ref{thm: refinement if secant hypersurface}
happens for Veronese varieties $X = \nu_d(\PP^{n-1})$ if and only if
\[
  (\rgen-1)n-1 = \binom{n+d-1}{n-1} - 2
\]
This happens if and only if $\binom{n+d-1}{n-1} \equiv 1 \pmod{n}$.
For example, if $n=2$ and $d$ is even then $\binom{n+d-1}{n-1} = d+1$ is odd;
other instances include $\binom{8}{2} \equiv 1 \pmod{3}$,
$\binom{11}{3} \equiv 1 \pmod{4}$, $\binom{14}{4} \equiv 1 \pmod{5}$.

\begin{example}
For binary $d$-forms the maximum rank (real or complex) is $\rmax = d$
and the generic rank is $\rgen = \lceil \frac{d+1}{2} \rceil$.
See \cite{MR2811260,Reznick:2013uq} for real binary forms with real (but not complex) Waring rank $d$.
In particular
$\rmax = 2\rgen - 2$ if $d$ is even, $\rmax = 2\rgen - 1$ if $d$ is odd,
so the upper bound of Theorem~\ref{thm:mainC} is almost sharp.

It is known that in the case $d$ is even, the $(\rgen-1)$st secant variety is a hypersurface,
defined by the vanishing of the determinant of the middle---that is, $(d/2)$th---catalecticant \cite{Sylvester:1851kx}.
Theorem~\ref{thm: refinement if secant hypersurface} then gives the upper bound $2\rgen - 1$,
which is still not sharp.

However it is not hard to show (and seems to be known to the experts)
that this hypersurface, which has degree $(d/2)+1$, has points of multiplicity $d/2$
precisely along the Veronese $\nu_d(\PP^1)$, the set of points of rank $1$.
Thus we recover, by adapting the proof of Theorem~\ref{thm: refinement if secant hypersurface},
the sharp bound $\rmax \leq 2\rgen - 2$.
\end{example}

\subsection{Tensor Rank}

Tensor rank of a tensor of order $d$ in $n$ variables corresponds precisely to the rank with respect to the Segre variety
$\operatorname{Seg}(\PP^n \times\dots \times \PP^n)$, with $d$ factors.
The generic tensor rank has been well-studied and it is known for several families of tensors.

\begin{example}[Tensors of format $2 \times \dotsb \times 2$]
The generic rank of tensors in $(\C^2)^{\otimes n} = \C^2 \otimes \dotsb \otimes \C^2$
is $\lceil \frac{2^n}{n+1} \rceil$ \cite{MR2762993}.
Therefore the maximum rank (real or complex) is at most $2 \lceil \frac{2^n}{n+1} \rceil$.
For $n > 8$ this is better than the bound $2^{n-2}$ given in \cite[Cor.~6.2]{Sumi:2013uq}
(see also \cite{Stavrou:2012kx} for the bound $3 \cdot 2^{n-3}$).
\end{example}

\begin{example}[Tensor rank in triple products]
It is known that the generic rank of tensors in $\C^n \otimes \C^n \otimes \C^n$
is $5$ when $n=3$, or $\lceil \frac{n^3}{3n-2} \rceil$ when $n>3$ \cite{MR798367}, \cite[Thm.~3.1.4.3]{MR2865915}.
Therefore by Theorem~\ref{thm:mainC} and Theorem~\ref{thm:mainR}
the maximum rank of tensors in $\C^n \otimes \C^n \otimes \C^n$
and the maximum real rank of tensors in $\R^n \otimes \R^n \otimes \R^n$
are at most $10$ when $n=3$, or $2\lceil \frac{n^3}{3n-2} \rceil \approx \frac{2}{3}n^2$ when $n>3$.

However this is not as good as previously known bounds.
It is known that the maximum rank of tensors in $\C^n \otimes \C^n \otimes \C^n$
and the maximum real rank of tensors in $\R^n \otimes \R^n \otimes \R^n$
are at most $\binom{n+1}{2}$ for $n \geq 4$, see \cite[Thm.~3.4, Thm.~3.7]{MR2652318} and \cite{MR545717,MR570374},
or $5$ when $n=3$ \cite{MR3089693}.

(See also \cite[Cor.~3.1.2.1]{MR2865915}, \cite[Cor.~3.5]{MR2996361}
and \cite[Cor.~6.7]{MR2854886}.)
\end{example}

\subsection{Waring problem with higher degree terms}
A variant of Waring rank is the number of terms needed to write a homogeneous polynomial of degree $kd$
as a linear combination of $k$th powers of $d$-forms.
See \cite{MR2935563,Reznick:2013yq,Carlini:2013vl}.
This is given by rank with respect to the projection of $\nu_k(\PP( \Sym^d \F^n))$ to $\PP( \Sym^{kd} \F^n)$
 given by the multiplication map $\Sym^k (\Sym^d \F^n) \to \Sym^{kd} \F^n$.
In \cite{MR2935563} it is shown that a generic complex $kd$-form in $n+1$ variables
is a sum of at most $k^n$ $k$th powers of $d$-forms,
and no fewer when $d$ is sufficiently large.
Therefore every complex $kd$-form is a sum of at most $2k^n$ $k$th powers of $d$-forms,
and every real $kd$-form is a real linear combination (possibly including negative coefficients)
of at most $2k^n$ $k$th powers of real $d$-forms.

\subsection{Antisymmetric Tensor Rank}

The rank of an alternating tensor $T \in \bigwedge^k \C^n$ is the least number of terms needed to write $T$ as a linear
combination of simple wedges. It is given by the rank with respect to a Grassmannian in its Pl\"ucker embedding.
In \cite{MR2846677} it is shown that the generic rank of an alternating tensor in $\bigwedge^3 \C^n$
is asymptotically $\frac{n^2}{18}$.
Therefore the maximum rank of such a tensor is asymptotically less than or equal to $\frac{n^2}{9}$.

\section*{Acknowledgements}

We thank E.~Ballico, J.~Buczy\'nski, and A.~de Paris for several very helpful comments.
We also thank the referee for several helpful comments, including suggesting the reference \cite{MR0008171},
and A.~Boralevi for providing us a copy of \cite{MR0008171}.
The first author was partially supported by Alfred P. Sloan Research Fellowship and NSF CAREER award DMS-1352073.

\bibliographystyle{amsalpha}
\renewcommand{\MR}[1]{{}}
\providecommand{\bysame}{\leavevmode\hbox to3em{\hrulefill}\thinspace}
\providecommand{\MRhref}[2]{%
  \href{http://www.ams.org/mathscinet-getitem?mr=#1}{#2}
}
\providecommand{\href}[2]{#2}

\end{document}